\theoremstyle{plain}
\newtheorem{theorem}{Theorem}[section]
\newtheorem{lemma}[theorem]{Lemma}
\newtheorem{claim}{Claim}
\theoremstyle{definition}
\numberwithin{equation}{section}
\numberwithin{equation}{section}
\begin{document}
\title[Additivity of maps preserving Jordan $\eta_{\ast}$-products on $C^{*}$-algebras]%
{Additivity of maps preserving Jordan $\eta_{\ast}$-products on
$C^{*}$-algebras}
\author[A. Taghavi, H. Rohi and V. Darvish]%
{Ali Taghavi*, Hamid Rohi and Vahid Darvish}

\newcommand{\acr}{\newline\indent}
\address{\llap{*\,}Department of Mathematics,\\ Faculty of Mathematical
Sciences,\\ University of Mazandaran,\\ P. O. Box 47416-1468,\\
Babolsar, Iran.} \email{taghavi@umz.ac.ir, h.rohi@stu.umz.ac.ir,
v.darvish@stu.umz.ac.ir}

 \subjclass[2010]{47B48, 46L10}
\keywords{Maps preserving Jordan $\eta_{\ast}$-product, Additive,
Prime $C^{*}$-algebras}

\begin{abstract}
Let $\mathcal{A}$ and $\mathcal{B}$ be two $C^{*}$-algebras such
that $\mathcal{B}$ is prime. In this paper, we investigate the
additivity of map $\Phi$ from $\mathcal{A}$ onto $\mathcal{B}$ that
are bijective unital and satisfies
$$\Phi(AP+\eta PA^{*})=\Phi(A)\Phi(P)+\eta \Phi(P)\Phi(A)^{*},$$
for all $A\in\mathcal{A}$ and $P\in\{P_{1},I_{\mathcal{A}}-P_{1}\}$
where $P_{1}$ is a nontrivial projection in $\mathcal{A}$. Let
$\eta$ be a non-zero complex number such that $|\eta|\neq1$, then
$\Phi$ is additive. Moreover, if $\eta$ is rational then $\Phi$ is
$\ast$-additive.
\end{abstract}

\maketitle

\section{Introduction}\label{intro}
Let $\mathcal{R}$ and $\mathcal{R^{'}}$ be rings. We say the map
$\Phi: \mathcal{R}\to \mathcal{R^{'}}$ preserves product or is
multiplicative if $\Phi(AB)=\Phi(A)\Phi(B)$ for all $A, B\in
\mathcal{R}$. The question of when a product preserving or
multiplicative map is additive was discussed by several authors, see
\cite{mar} and references therein. Motivated by this, many authors
pay more attention to the map on rings (and algebras) preserving the
Lie product $[A,B]=AB-BA$ or the Jordan product $A\circ B=AB+BA$
(for example, Refs.
ref~\cite{bai,bai2,bre,hak,ji,lu,lu2,lu3,mol2,qi}). These results
show that, in some sense, the Jordan product or Lie product
structure is enough to determine the ring or algebraic structure.
Historically, many mathematicians devoted themselves to the study of
additive or linear Jordan or Lie product preservers between rings or
operator algebras. Such maps are always called Jordan homomorphism
or Lie homomorphism. Here we only list several results
\cite{bei,bei2,bei3,her,jac,kad,mar,mir,mir2}.

Let $\mathcal{R}$ be a $\ast$-ring and $\eta$ be a non-zero complex
scalar. For $A,B\in\mathcal{R}$, denoted the Jordan
$\eta_{\ast}$-product of $A$ and $B$ as $A\bullet_{\eta}B=AB+\eta
BA^{*}$ and Lie $\eta_{\ast}$-product as
$[A,B]_{\ast}^{\eta}=AB-\eta BA^{\ast}$. Particulary $A\bullet_{1}
B=AB+BA^{*}$ and $[A,B]_{\ast}^{1}=AB-BA^{*}$ are Jordan
$1_{\ast}$-product and Lie $1_{\ast}$-product, respectively. These
products are found playing a more and more important role in some
research topics, and its study has recently attracted many author's
attention (for example, see \cite{bre2,mol,sem}). A natural problem
is to study whether the map $\Phi$ preserving Jordan
$\eta_{\ast}$-product or Lie $\eta_{\ast}$-product on ring or
algebra $\mathcal{R}$ is a ring or algebraic isomorphism. In
\cite{cui}, J. Cui and C. K. Li proved a bijective map $\Phi$ on
factor von Neumann algebras which preserves the Lie
$1_{\ast}$-product ($[A,B]_{*}^{1}$) must be a $\ast$-isomorphism.
Moreover, in \cite{li} C. Li et al, discussed the nonlinear
bijective mapping preserving Jordan $1_{\ast}$-product
($A\bullet_{1} B$). They proved that such mapping on factor von
Neumann algebras is also $\ast$-ring isomorphism. These two articles
discussed new products for arbitrary operators on factor von Neumann
algebras. Also, A. Taghavi et al \cite{taghavi}, proved a bijective
unital map (not necessarily linear) on prime $C^{*}$-algebra which
preserving both Lie $1_{\ast}$-product and Jordan $1_{\ast}$-product
for which one of operators is projection must be $\ast$-additive
(i.e., additive and star-preserving). In a recent paper \cite{dai},
L. Dai and F. Lu proved a bijective map $\Phi$ on von Neumann
algebras which preserving Jordan $\eta_{\ast}$-product is a linear
$\ast$-isomorphism if $\eta$ is not real and $\Phi$ is a sum of a
linear $\ast$-isomorphism and a
conjugate linear $\ast$-isomorphism if $\eta$ is real.\\
In this paper, we will discuss such a bijective unital map on prime
$C^{*}$-algebra which preserving Jordan $\eta_{\ast}$-product
$\Phi(A\bullet_{\eta}P)=\Phi(A)\bullet_{\eta} \Phi(P)$ is additive.
We denote real part of an operator $T$ by $\Re(T)$, i.e.,
$\Re(T)=\frac{T+T^{*}}{2}$. It is well known that $C^{*}$-algebra
$\mathcal{A}$ is prime, in the sense that $A\mathcal{A}B=0$ for
$A,B\in \mathcal{A}$ implies either $A=0$ or $B=0$.

\section{Main Results}
We need the following lemmas for proving our Main Theorem.

\begin{lemma}
Let $\mathcal{A}$ and $\mathcal{B}$ be two $C^{*}$-algebras and
$\Phi:\mathcal{A}\to\mathcal{B}$ be a map which satisfies
$\Phi(A\bullet_{\eta}P)= \Phi(A)\bullet_{\eta}\Phi(P)$ for all
$A\in\mathcal{A}$ and some $P\in\mathcal{A}$ where $\eta$ is a
non-zero complex number such that $|\eta|\neq1$. Let $A, B$ and $T$
be in $\mathcal{A}$ such that $\Phi(T)=\Phi(A)+\Phi(B)$. Then we
have
\begin{equation}\label{standard}
\Phi(T\bullet_{\eta}P)=\Phi(A\bullet_{\eta}P)+\Phi(B\bullet_{\eta}P).
\end{equation}
\end{lemma}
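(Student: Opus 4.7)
The plan is to prove this by a direct computation: apply the preservation hypothesis on both sides and exploit that the map $X \mapsto X \bullet_\eta Y = XY + \eta Y X^{*}$ is additive in $X$ (since the adjoint is additive). This is a ``standard lemma'' in the literature on preserver problems: it packages the observation that any preimage of a sum automatically respects the product structure under $\bullet_\eta P$.

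Concretely, I would first use the hypothesis applied to $T$ to write
\[
\Phi(T \bullet_{\eta} P) \;=\; \Phi(T) \bullet_{\eta} \Phi(P) \;=\; \Phi(T)\Phi(P) + \eta\, \Phi(P)\Phi(T)^{*}.
\]
Then I substitute $\Phi(T) = \Phi(A)+\Phi(B)$ into both summands. Since the adjoint is an additive operation, $\Phi(T)^{*} = \Phi(A)^{*} + \Phi(B)^{*}$, so the right-hand side expands as
\[
\bigl(\Phi(A)+\Phi(B)\bigr)\Phi(P) \;+\; \eta\, \Phi(P)\bigl(\Phi(A)^{*} + \Phi(B)^{*}\bigr).
\]
Regrouping the four terms according to whether they involve $\Phi(A)$ or $\Phi(B)$ yields $\Phi(A)\bullet_{\eta}\Phi(P) + \Phi(B)\bullet_{\eta}\Phi(P)$. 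Finally, I apply the preservation hypothesis in reverse to each summand, obtaining $\Phi(A\bullet_{\eta}P) + \Phi(B\bullet_{\eta}P)$, which is exactly \eqref{standard}.

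There is essentially no obstacle here; the only thing to watch is that one uses the additivity of the adjoint (never its conjugate linearity, since $\eta$ is treated as a scalar multiplier sitting outside). It is worth noting that the assumption $|\eta|\neq 1$ plays no role at this stage, nor does primeness of $\mathcal{B}$ or unitality of $\Phi$; the lemma is a purely formal consequence of the preservation identity and the biadditive structure of $\bullet_{\eta}$. Its utility will appear later: in the main theorem one typically produces, via the preservation identity, an element $T$ with $\Phi(T)=\Phi(A)+\Phi(B)$, and this lemma then immediately transfers additivity from the level of $\Phi$ to the level of $\Phi\circ(\,\cdot\,\bullet_{\eta}P)$, which can be further exploited using primeness and the nontrivial projection $P_{1}$.
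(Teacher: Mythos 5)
Your proposal is correct and follows essentially the same route as the paper: both arguments substitute $\Phi(T)=\Phi(A)+\Phi(B)$ (and its adjoint) into $\Phi(T)\bullet_{\eta}\Phi(P)$, regroup using the biadditivity of $X\mapsto X\bullet_{\eta}Y$ in $X$, and then invoke the preservation identity on each of $T$, $A$, $B$. Your added remarks that $|\eta|\neq 1$, primeness, and unitality are not used here are accurate and consistent with the paper.
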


\begin{proof}
Multiply the equalities $\Phi(T)=\Phi(A)+\Phi(B)$ and
$\Phi(T)^{*}=\Phi(A)^{*}+\Phi(B)^{*}$ by $\Phi(P)$ from the right
and $\eta \Phi(P)$ from the left. We get
$$\Phi(T)\Phi(P)=\Phi(A)\Phi(P)+\Phi(B)\Phi(P),$$
and
$$\eta\Phi(P)\Phi(T)^{*}=\eta\Phi(P)\Phi(A)^{*}+\eta\Phi(P)\Phi(B)^{*}.$$
By adding two equations, we have
$$\Phi(T)\Phi(P)+\eta\Phi(P)\Phi(T)^{*}=\Phi(A)\Phi(P)+\eta\Phi(P)\Phi(A)^{*}+\Phi(B)\Phi(P)+\eta\Phi(P)\Phi(B)^{*}.$$
\end{proof}

\begin{lemma}\label{l3}
Let $\mathcal{A}$ be a $C^{*}$-algebra. Suppose $T\in\mathcal{A}$
and $\eta$ be a non-zero complex number such that $|\eta|\neq1$. If
$T+\eta T^{*}=0$ then $T=0$.
\end{lemma}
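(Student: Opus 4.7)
The plan is to exploit the interaction between the equation $T+\eta T^{*}=0$ and its adjoint to extract a scalar factor of $1-|\eta|^{2}$ multiplying $T$, which is nonzero by hypothesis.

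First, I would take the adjoint of the given relation $T+\eta T^{*}=0$, which yields $T^{*}+\bar{\eta}T=0$. Rearranging, this tells us $T^{*}=-\bar{\eta}T$.

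Next, I would substitute this expression for $T^{*}$ back into the original equation. This gives
\[
0 = T + \eta T^{*} = T + \eta(-\bar{\eta}T) = (1-|\eta|^{2})T.
\]
Since $|\eta|\neq 1$, the scalar $1-|\eta|^{2}$ is a nonzero complex number, so we may divide to conclude $T=0$.

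There is really no obstacle here; the whole argument is a one-line manipulation once one writes down the adjoint equation. The hypothesis $\eta\neq 0$ is not even needed for this direction of the argument (if $\eta=0$ the conclusion is immediate from $T=0$), but the condition $|\eta|\neq 1$ is essential, since otherwise any skew-adjoint (when $\eta=1$) or self-adjoint (when $\eta=-1$) operator, and more generally any $T$ with $T^{*}=-\bar{\eta}T$ when $|\eta|=1$, would furnish a counterexample.
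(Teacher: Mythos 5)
Your argument is correct and is exactly the paper's proof: the paper likewise takes the adjoint to obtain $T^{*}+\bar{\eta}T=0$ and then eliminates $T^{*}$ to get $(1-\bar{\eta}\eta)T=0$, concluding $T=0$ since $|\eta|\neq1$. Your additional remarks on the necessity of $|\eta|\neq1$ are accurate but not needed.
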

\begin{proof}
Let $T+\eta T^{*}=0$, we can also write $T^{*}+\bar{\eta}T=0$. By an
easy computation we have $(1-\bar{\eta}\eta)T=0$. Then, $T=0$.
\end{proof}
\begin{lemma}\label{0}
Let $\mathcal{A}$ and $\mathcal{B}$ be two $C^{*}$-algebras with
identities and $\Phi:\mathcal{A}\to\mathcal{B}$ be a map which
satisfies $\Phi(A\bullet_{\eta} P)=\Phi(A)\bullet_{\eta} \Phi(P)$
for all $A\in\mathcal{A}$ and  $P\in\{P_{1},I-P_{1}\}$ where $P_{1}$
is a nontrivial projection in $\mathcal{A}$ and $\eta$ is a non-zero
complex number such that $|\eta|\neq1$, then $\Phi(0)=0.$
\end{lemma}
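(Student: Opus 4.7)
The approach is to exploit the bijectivity of $\Phi$ (a standing assumption of the Main Theorem, inherited here): locate an element $A_{0}$ with $\Phi(A_{0}) = 0$, show it must vanish, and conclude $\Phi(0) = 0$. The heart of the argument is that feeding $A_{0}$ into the product identity with both projections $P_{1}$ and $I - P_{1}$ and then summing causes the projections to telescope to $I_{\mathcal{A}}$, reducing matters to the elementary Lemma \ref{l3}.

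Concretely, by surjectivity I would first pick $A_{0} \in \mathcal{A}$ with $\Phi(A_{0}) = 0$. Applying the hypothesis with $A = A_{0}$ to each $P \in \{P_{1}, I - P_{1}\}$ gives
\begin{equation*}
\Phi(A_{0} \bullet_{\eta} P) = \Phi(A_{0}) \bullet_{\eta} \Phi(P) = 0 \cdot \Phi(P) + \eta \Phi(P) \cdot 0^{*} = 0 = \Phi(A_{0}),
\end{equation*}
so injectivity forces $A_{0} \bullet_{\eta} P_{1} = A_{0}$ and $A_{0} \bullet_{\eta} (I - P_{1}) = A_{0}$. Adding these two identities and using $P_{1} + (I - P_{1}) = I_{\mathcal{A}}$ collapses the left-hand side to $A_{0} + \eta A_{0}^{*}$, while the right-hand side becomes $2 A_{0}$. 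Hence $A_{0} + (-\eta) A_{0}^{*} = 0$; since $|-\eta| = |\eta| \neq 1$, Lemma \ref{l3} (with $-\eta$ in place of $\eta$) yields $A_{0} = 0$, and therefore $\Phi(0) = \Phi(A_{0}) = 0$.

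The main subtlety is that the argument genuinely uses bijectivity even though the statement of this lemma does not repeat that hypothesis: surjectivity is what produces $A_{0}$, and injectivity is what converts the derived equation $\Phi(A_{0} \bullet_{\eta} P) = \Phi(A_{0})$ into the pointwise identity $A_{0} \bullet_{\eta} P = A_{0}$. Without it the conclusion fails in general, since a constant map $\Phi \equiv c$ with $c \in \mathcal{B}$ satisfying $c = c^{2} + \eta |c|^{2}$ verifies the Jordan $\eta_{*}$-product identity but has $\Phi(0) = c \neq 0$. Once bijectivity is granted, the rest of the proof is a one-line telescoping calculation followed by an appeal to Lemma \ref{l3}.
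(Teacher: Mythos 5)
Your proof is correct and takes essentially the same route as the paper: pick a preimage $T$ of $0$ (surjectivity), apply the product identity with both $P_{1}$ and $P_{2}=I-P_{1}$, invoke injectivity, and finish with Lemma \ref{l3}. The only difference is the endgame — the paper deduces $T\bullet_{\eta}P_{1}=T\bullet_{\eta}P_{2}$, applies Lemma \ref{l3} to get $TP_{1}=TP_{2}$, and then kills each piece by multiplying with projections, whereas you deduce $T\bullet_{\eta}P_{i}=T$ for each $i$ and add so that the projections telescope to $I$ before one application of Lemma \ref{l3} (with $-\eta$); your variant is slightly cleaner but not a genuinely different argument.
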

\begin{proof}
Let $\Phi(T)=0$, we prove that $T=0$. For showing this, apply Lemma
\ref{standard} to $\Phi(T)=0$ for $P_{1}$ and $P_{2}$, we have
$$\Phi(T\bullet_{\eta}P_{1})=0,$$
and
$$\Phi(T\bullet_{\eta}P_{2})=0.$$
So, by injectivity of $\Phi$ we obtain
$T\bullet_{\eta}P_{1}=T\bullet_{\eta}P_{2}$. By definition of Jordan
$\eta_{\ast}$-product we have $TP_{1}+\eta P_{1}T^{*}=TP_{2}+\eta
P_{2}T^{*}$, i.e., $(TP_{1}-TP_{2})+\eta(TP_{1}-TP_{2})^{*}=0.$ Now,
by implying Lemma \ref{l3}, we obtain $TP_{1}=TP_{2}$, we multiply
the latter equation by $P_{2}$ from right side, it follows
$TP_{2}=0$. So, $TP_{1}=0$. Now, we put $I-P_{2}$ instead of $P_{1}$
in $TP_{1}=0$, then we obtain $T(I-P_{2})=0$. Therefore $T=0$, since
$TP_{2}=0$.
\end{proof}
\paragraph{}
 Our main theorem is as follows:
\\
\\
\textbf{Main Theorem.} Let $\mathcal{A}$ and $\mathcal{B}$ be two
$C^{*}$-algebras such that $\mathcal{B}$ is prime, with
$I_{\mathcal{A}}$ and $I_{\mathcal{B}}$ the identities of them,
respectively. If $\Phi : \mathcal {A}\to \mathcal{B}$ is a unital
bijective map which satisfies $\Phi(A\bullet_{\eta}P)=
\Phi(A)\bullet_{\eta}\Phi(P)$ for all $A\in\mathcal{A}$ and
$P\in\{P_{1},I_{\mathcal{A}}-P_{1}\}$ where $P_{1}$ is a nontrivial
projection in $\mathcal{A}$ and $\eta$ is a non-zero complex number
such that $|\eta|\neq1$. Then, $\Phi$ is additive. Moreover, if
$\eta$ is rational then $\Phi$ is $\ast$-additive.
\\
\\
\textit{Proof of Main Theorem.} Let $P_{2}=I_{\mathcal{A}}-P_{1}$.
Denote $\mathcal{A}_{ij}=P_{i}\mathcal{A}P_{j},\ i,j=1,2,$ then
$\mathcal{A}=\sum_{i,j=1}^{2}\mathcal{A}_{ij}$. For every
$A\in\mathcal{A}$ we may write $A=A_{11}+A_{12}+A_{21}+A_{22}$. In
all that follows, when we write $A_{ij}$, it indicates that
$A_{ij}\in\mathcal{A}_{ij}$.\\
For showing additivity of $\Phi$ on $\mathcal{A}$ we will use above
partition of $\mathcal{A}$ and give some claims that prove $\Phi$ is
additive on each $\mathcal{A}_{ij}, \ i,j=1,2$.
\begin{claim}\label{1122}
For every $A_{11}\in\mathcal{A}_{11}$ and  $D_{22}\in
\mathcal{A}_{22}$, we have
$$\Phi(A_{11}+D_{22})=\Phi(A_{11})+\Phi(D_{22}).$$
\end{claim}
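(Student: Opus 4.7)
The plan is to produce a witness $T\in\mathcal{A}$ with $\Phi(T)=\Phi(A_{11})+\Phi(D_{22})$ via surjectivity, and then show by direct computation that $T$ must equal $A_{11}+D_{22}$. Injectivity then finishes the claim.

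First I would invoke surjectivity of $\Phi$ to pick $T\in\mathcal{A}$ satisfying $\Phi(T)=\Phi(A_{11})+\Phi(D_{22})$. Lemma \ref{standard} (the ``standard'' linearity property derived from the multiplicative identity) then yields, for each $P\in\{P_{1},P_{2}\}$,
\begin{equation*}
\Phi(T\bullet_{\eta}P)=\Phi(A_{11}\bullet_{\eta}P)+\Phi(D_{22}\bullet_{\eta}P).
\end{equation*}
Next I would simply expand the Jordan $\eta_{\ast}$-products using $P_{1}A_{11}=A_{11}P_{1}=A_{11}$, $P_{2}A_{11}=A_{11}P_{2}=0$, and analogously for $D_{22}$. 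This gives $A_{11}\bullet_{\eta}P_{2}=0$ and $D_{22}\bullet_{\eta}P_{1}=0$, so using $\Phi(0)=0$ from Lemma \ref{0} the two displayed equations reduce to
\begin{equation*}
\Phi(T\bullet_{\eta}P_{1})=\Phi(A_{11}\bullet_{\eta}P_{1}),\qquad \Phi(T\bullet_{\eta}P_{2})=\Phi(D_{22}\bullet_{\eta}P_{2}).
\end{equation*}

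Then I would apply injectivity of $\Phi$ to obtain $T\bullet_{\eta}P_{i}=A_{11}\bullet_{\eta}P_{1}$ (resp.\ $D_{22}\bullet_{\eta}P_{2}$) for $i=1,2$. Writing out the first equality gives
\begin{equation*}
(TP_{1}-A_{11})+\eta(TP_{1}-A_{11})^{*}=0,
\end{equation*}
after observing that $A_{11}^{*}=P_{1}A_{11}^{*}=(A_{11}P_{1})^{*}=(TP_{1})^{*}$ matches up correctly as the adjoint of $TP_{1}-A_{11}$. Lemma \ref{l3} (the $|\eta|\neq 1$ cancellation) then forces $TP_{1}=A_{11}$, and the second equality similarly forces $TP_{2}=D_{22}$.

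Finally, adding these two identities and using $P_{1}+P_{2}=I_{\mathcal{A}}$ gives $T=A_{11}+D_{22}$, hence $\Phi(A_{11}+D_{22})=\Phi(T)=\Phi(A_{11})+\Phi(D_{22})$. The whole argument is really a template: the only non-formal step is the cancellation via Lemma \ref{l3}, which is exactly why the hypothesis $|\eta|\neq 1$ is needed, and that is the point at which the proof could conceivably fail for other $\eta$. Everything else is bookkeeping about the Peirce decomposition against $P_{1}$ and $P_{2}$.
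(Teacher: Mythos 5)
Your proposal is correct and follows essentially the same route as the paper's proof of this claim: pick $T$ by surjectivity, apply Lemma \ref{standard} for $P_{1}$ and $P_{2}$, observe that the cross terms $D_{22}\bullet_{\eta}P_{1}$ and $A_{11}\bullet_{\eta}P_{2}$ vanish, and cancel via injectivity and Lemma \ref{l3}. The only (harmless, and in fact slightly cleaner) difference is that you apply Lemma \ref{l3} directly to $X=TP_{i}-A_{11}$ (resp.\ $TP_{i}-D_{22}$), using $(TP_{i})^{*}=P_{i}T^{*}$, and then add $TP_{1}+TP_{2}=T$, whereas the paper first isolates the off-diagonal Peirce components $T_{21}$, $T_{12}$ by multiplying by the complementary projection before invoking Lemma \ref{l3} componentwise.
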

Since $\Phi$ is surjective, we can find an element
$T=T_{11}+T_{12}+T_{21}+T_{22}\in\mathcal{A}$ such that
\begin{equation}\label{ff1}
\Phi(T)=\Phi(A_{11})+\Phi(D_{22}),
\end{equation}
we should show $T=A_{11}+D_{22}$. We apply Lemma \ref{standard} to
(\ref{ff1}) for $P_{1}$, then we can write
$$\Phi(T\bullet_{\eta}P_{1})=\Phi(A_{11}\bullet_{\eta}P_{1})+\Phi(D_{22}\bullet_{\eta}P_{1}),$$
so,
$$\Phi(T_{11}+T_{21}+\eta T_{11}^{*}+\eta T_{21}^{*})=\Phi(A_{11}+\eta A_{11}^{*}),$$ by injectivity of $\Phi$, we get
$T_{11}+T_{21}+\eta T_{11}^{*}+\eta T_{21}^{*}=A_{11}+\eta
A_{11}^{*}$.\\
Multiply latter equation by $P_{2}$ from right and left side
respectively we obtain $T_{21}=T_{21}^{*}=0$. So, we have
$T_{11}+\eta T_{11}^{*}=A_{11}+\eta A_{11}^{*}$, or
$(T_{11}-A_{11})+\eta(T_{11}-A_{11})^{*}=0$. By Lemma \ref{l3}, we
get $T_{11}=A_{11}$.\\
Similarly, we apply  Lemma \ref{standard} to (\ref{ff1}) for
$P_{2}$, we have
$$\Phi(T\bullet_{\eta}P_{2})=\Phi(A_{11}\bullet_{\eta}P_{2})+\Phi(D_{22}\bullet_{\eta}P_{2}),$$
so,
$$\Phi(T_{12}+T_{22}+\eta T_{12}^{*}+\eta T_{22}^{*})=\Phi(D_{22}+\eta D_{22}^{*}),$$ by injectivity of $\Phi$, we get
$T_{12}+T_{22}+\eta T_{12}^{*}+\eta T_{22}^{*}=D_{22}+\eta
D_{22}^{*}$.\\
Multiply latter equation by $P_{1}$ from right and left side
respectively we obtain $T_{12}=T_{12}^{*}=0$. So, we have
$T_{22}+\eta T_{22}^{*}=D_{22}+\eta D_{22}^{*}$, or
$(T_{22}-D_{22})+\eta(T_{22}-D_{22})^{*}=0$. By Lemma \ref{l3}, we
get $T_{22}=D_{22}$.\\
\begin{claim}\label{1221}
For every $B_{12}\in\mathcal{A}_{12}$, $C_{21}\in\mathcal{A}_{21}$,
we have
$$\Phi(B_{12}+C_{21})=\Phi(B_{12})+\Phi(C_{21}).$$
\end{claim}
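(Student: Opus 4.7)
The strategy for Claim \ref{1221} mirrors the proof of Claim \ref{1122}. By surjectivity of $\Phi$, pick $T = T_{11} + T_{12} + T_{21} + T_{22}$ in $\mathcal{A}$ with $\Phi(T) = \Phi(B_{12}) + \Phi(C_{21})$; it suffices to show $T = B_{12} + C_{21}$, after which injectivity of $\Phi$ closes the claim.

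First, apply Lemma \ref{standard} with $P = P_{1}$. Since $B_{12} \in P_{1}\mathcal{A}P_{2}$ one has $B_{12}P_{1} = 0$ and $P_{1}B_{12}^{*} = 0$, so $B_{12}\bullet_{\eta}P_{1} = 0$; on the other hand $C_{21}P_{1} = C_{21}$ and $P_{1}C_{21}^{*} = C_{21}^{*}$, hence $C_{21}\bullet_{\eta}P_{1} = C_{21} + \eta C_{21}^{*}$. Using $\Phi(0) = 0$ from Lemma \ref{0} and then injectivity of $\Phi$, the identity produced by Lemma \ref{standard} becomes
$$T_{11} + T_{21} + \eta T_{11}^{*} + \eta T_{21}^{*} = C_{21} + \eta C_{21}^{*}.$$
Reading off the $P_{1}\cdot P_{1}$ block (multiply by $P_{1}$ on both sides) yields $T_{11} + \eta T_{11}^{*} = 0$, so Lemma \ref{l3} forces $T_{11} = 0$; comparing the remaining off-diagonal blocks gives $T_{21} = C_{21}$.

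The symmetric argument with $P = P_{2}$ uses $C_{21}\bullet_{\eta}P_{2} = 0$ and $B_{12}\bullet_{\eta}P_{2} = B_{12} + \eta B_{12}^{*}$, producing
$$T_{12} + T_{22} + \eta T_{12}^{*} + \eta T_{22}^{*} = B_{12} + \eta B_{12}^{*},$$
whence $T_{12} = B_{12}$ together with $T_{22} + \eta T_{22}^{*} = 0$, so $T_{22} = 0$ by Lemma \ref{l3}. Assembling the four identities gives $T = B_{12} + C_{21}$, and the claim is proved. The only delicate point is the Peirce bookkeeping that converts each single equation in $\mathcal{A}$ into four independent identities on the blocks $\mathcal{A}_{ij}$; the hypothesis $|\eta|\neq 1$ enters precisely through Lemma \ref{l3} when we need to kill the diagonal pieces $T_{11}$ and $T_{22}$.
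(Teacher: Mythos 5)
Your proof is correct and follows essentially the same route as the paper: choose a preimage $T$, apply Lemma \ref{standard} for $P_{1}$ and $P_{2}$, use injectivity, and read off the Peirce blocks (the paper multiplies by $P_{2}$ from the left where you multiply by $P_{1}$, but this is the same block bookkeeping; the paper does not even invoke Lemma \ref{l3} explicitly here, though doing so as you do is the cleanest way to kill $T_{11}$ and $T_{22}$). The only cosmetic slip is the remark that ``injectivity closes the claim'' at the end --- once $T=B_{12}+C_{21}$ the conclusion is immediate from $\Phi(T)=\Phi(B_{12})+\Phi(C_{21})$; injectivity is needed only in the intermediate steps.
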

Let $T=T_{11}+T_{12}+T_{21}+T_{22}\in\mathcal{A}$ be such that
\begin{equation}\label{b}
\Phi(T)=\Phi(B_{12})+\Phi(C_{21}).
\end{equation}
By applying Lemma \ref{standard} to (\ref{b}) for $P_{1}$, we have
\begin{eqnarray*}
\Phi(T\bullet_{\eta}P_{1})&=&\Phi(B_{12}\bullet_{\eta}P_{1})+\Phi(C_{21}\bullet_{\eta}P_{1})\\
&=&\Phi(C_{21}\bullet_{\eta}P_{1}).
\end{eqnarray*}
Thus, by injectivity of $\Phi$ we have
$T\bullet_{\eta}P_{1}=C_{21}\bullet_{\eta}P_{1}$. It follows that
$$T_{11}+T_{21}+\eta T_{11}^{*}+\eta T_{21}^{*}=C_{21}+\eta
C_{21}^{*}.$$ Multiply above equation by $P_{2}$ from left side we
have $T_{21}=C_{21}$ and $T_{11}=0$.\\
Similarly, we can obtain $T_{12}=B_{12}$ and $T_{22}=0$ by applying
Lemma \ref{standard} to (\ref{b}) for $P_{2}$.
\begin{claim}\label{1112}
For every $A_{11}\in\mathcal{A}_{11}$ and  $B_{12}\in
\mathcal{A}_{12}$, we have
$$\Phi(A_{11}+B_{12})=\Phi(A_{11})+\Phi(B_{12}).$$
\end{claim}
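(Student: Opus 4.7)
The plan is to imitate the pattern of Claims \ref{1122} and \ref{1221} almost verbatim, adapting only the Peirce components that vanish. By surjectivity of $\Phi$, choose $T=T_{11}+T_{12}+T_{21}+T_{22}\in\mathcal{A}$ with $\Phi(T)=\Phi(A_{11})+\Phi(B_{12})$, and aim to prove $T=A_{11}+B_{12}$.

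First I would apply Lemma \ref{standard} with $P=P_{1}$. The key observation is that $B_{12}\bullet_{\eta}P_{1}=B_{12}P_{1}+\eta P_{1}B_{12}^{*}=0$, since $B_{12}P_{1}=0$ and $P_{1}B_{12}^{*}=P_{1}P_{2}B^{*}P_{1}=0$. Combined with $\Phi(0)=0$ from Lemma \ref{0}, this gives $\Phi(T\bullet_{\eta}P_{1})=\Phi(A_{11}\bullet_{\eta}P_{1})$, and by injectivity
\[
T_{11}+T_{21}+\eta T_{11}^{*}+\eta T_{21}^{*}=A_{11}+\eta A_{11}^{*}.
\]
Multiplying by $P_{2}$ from the left kills every term except $T_{21}$, yielding $T_{21}=0$; the remaining identity $(T_{11}-A_{11})+\eta(T_{11}-A_{11})^{*}=0$ combined with Lemma \ref{l3} gives $T_{11}=A_{11}$.

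Next I would apply Lemma \ref{standard} with $P=P_{2}$. This time $A_{11}\bullet_{\eta}P_{2}=0$, so by injectivity
\[
T_{12}+T_{22}+\eta T_{12}^{*}+\eta T_{22}^{*}=B_{12}+\eta B_{12}^{*}.
\]
Multiplying by $P_{1}$ from the left isolates $T_{12}=B_{12}$; substituting back leaves $T_{22}+\eta T_{22}^{*}=0$, and Lemma \ref{l3} forces $T_{22}=0$. Assembling the four pieces gives $T=A_{11}+B_{12}$, hence $\Phi(A_{11}+B_{12})=\Phi(T)=\Phi(A_{11})+\Phi(B_{12})$.

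There is no real obstacle here; this claim is structurally the same as Claims \ref{1122} and \ref{1221}, and the whole argument is driven by the standard surjectivity-plus-injectivity trick from Lemma \ref{standard}, the bookkeeping of the Peirce decomposition $\mathcal{A}=\bigoplus_{i,j}\mathcal{A}_{ij}$, and the cancellation provided by Lemma \ref{l3} (which crucially uses the hypothesis $|\eta|\neq 1$). The only point that needs a little care is verifying the two vanishing identities $B_{12}\bullet_{\eta}P_{1}=0$ and $A_{11}\bullet_{\eta}P_{2}=0$; once these are in hand, the equations collapse exactly as in the preceding claims.
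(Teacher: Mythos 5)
Your proposal is correct and follows exactly the route the paper takes (the paper merely states the conclusions of applying Lemma \ref{standard} for $P_{1}$ and $P_{2}$ without writing out the computations, which you supply accurately). The vanishing identities $B_{12}\bullet_{\eta}P_{1}=0$ and $A_{11}\bullet_{\eta}P_{2}=0$, the left multiplications by $P_{2}$ and $P_{1}$, and the appeal to Lemma \ref{l3} are all as intended.
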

Since $\Phi$ is surjective, we can find an element
$T=T_{11}+T_{12}+T_{21}+T_{22}\in\mathcal{A}$ such that
\begin{equation}\label{sh1}
\Phi(T)=\Phi(A_{11})+\Phi(B_{12}),
\end{equation}
we should show $T=A_{11}+B_{12}$. We apply Lemma \ref{standard} to
(\ref{sh1}) for $P_{1}$, then we can write $T_{11}=A_{11}$ and $T_{21}=0$.\\
Similarly, we apply Lemma \ref{standard} to (\ref{sh1}) for $P_{2}$,
we will have $T_{12}=B_{12}$ and $T_{22}=0$. So,
$T=A_{11}+B_{12}$.\\
\\
Note that $\Phi(C_{21}+D_{22})=\Phi(C_{21})+\Phi(D_{22})$ where
$C_{21}\in\mathcal{A}_{21}$ and $D_{22}\in\mathcal{A}_{22}$  can be
obtained as above.\\

\begin{claim}\label{1121}
For every $A_{11}\in\mathcal{A}_{11}$ and  $C_{21}\in
\mathcal{A}_{21}$, we have
$$\Phi(A_{11}+C_{21})=\Phi(A_{11})+\Phi(C_{21}).$$
\end{claim}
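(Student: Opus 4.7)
The plan is to follow the template of the earlier claims: by surjectivity of $\Phi$, pick $T = T_{11} + T_{12} + T_{21} + T_{22} \in \mathcal{A}$ with $\Phi(T) = \Phi(A_{11}) + \Phi(C_{21})$, and then show $T = A_{11} + C_{21}$ block by block.

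First I would apply Lemma \ref{standard} to this equation with $P_2$. Since $A_{11} \bullet_\eta P_2 = C_{21} \bullet_\eta P_2 = 0$, the right-hand side collapses and we obtain $\Phi(T \bullet_\eta P_2) = 0$, hence $T \bullet_\eta P_2 = 0$ by Lemma \ref{0} together with injectivity. The usual component projections combined with Lemma \ref{l3} (for the $\mathcal{A}_{22}$-piece) then yield $T_{12} = T_{22} = 0$, exactly as in Claims \ref{1122} and \ref{1112}.

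Next I would apply Lemma \ref{standard} to the same equation with $P_1$, producing
$$\Phi(T_{11} + T_{21} + \eta T_{11}^* + \eta T_{21}^*) = \Phi(A_{11} + \eta A_{11}^*) + \Phi(C_{21} + \eta C_{21}^*). \qquad (\ast)$$
Here is the point of genuine novelty: neither $A_{11} \bullet_\eta P_1$ nor $C_{21} \bullet_\eta P_1$ vanishes, so unlike in the preceding claims a single use of injectivity does not extract the remaining blocks of $T$. My next move would be to view $(\ast)$ itself as an equation of the form $\Phi(X) = \Phi(Y) + \Phi(Z)$ and apply Lemma \ref{standard} to it a second time with $P_2$: the piece $A_{11} + \eta A_{11}^*$ lies in $\mathcal{A}_{11}$ and annihilates under $\bullet_\eta P_2$, while $(C_{21} + \eta C_{21}^*) \bullet_\eta P_2 = \eta C_{21}^* + |\eta|^2 C_{21}$, so comparing $\mathcal{A}_{21}$-components after injectivity forces $T_{21} = C_{21}$.

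The main obstacle will be extracting $T_{11} = A_{11}$. My plan is to reorganise the right-hand side of $(\ast)$ by first using Claim \ref{1221} to split $\Phi(C_{21}+\eta C_{21}^*) = \Phi(C_{21}) + \Phi(\eta C_{21}^*)$, and then Claim \ref{1112} to recombine $\Phi(A_{11}+\eta A_{11}^*) + \Phi(\eta C_{21}^*)$ as $\Phi(A_{11}+\eta A_{11}^*+\eta C_{21}^*)$, which brings $(\ast)$ into the form $\Phi(W) = \Phi(V) + \Phi(C_{21})$ with $W = T_{11}+\eta T_{11}^*+C_{21}+\eta C_{21}^*$ and $V = A_{11}+\eta A_{11}^*+\eta C_{21}^*$. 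The delicate step will then be to iterate Lemma \ref{standard} on this rewritten equation and re-apply the partial-additivity Claims \ref{1221} and \ref{1112} carefully, with the goal of isolating an equation $\Phi(T_{11}+\eta T_{11}^*) = \Phi(A_{11}+\eta A_{11}^*)$ living purely inside $\mathcal{A}_{11}$; once this is in hand, injectivity together with Lemma \ref{l3} delivers $T_{11} = A_{11}$, completing the claim. I expect this last cancellation to be the hardest part of the proof, precisely because every naive iteration of Lemma \ref{standard} with $P_1$ reproduces an equation of the same structural shape (with $T_{11}$ and $A_{11}$ replaced by $(1+|\eta|^2)T_{11}+2\eta T_{11}^*$ and $(1+|\eta|^2)A_{11}+2\eta A_{11}^*$), so the key is to combine the iterations with the rewriting above rather than rely on either in isolation.
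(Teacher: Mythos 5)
Your handling of the first two blocks is fine: the $P_{2}$ step giving $T_{12}=T_{22}=0$ is exactly the paper's, and your route to $T_{21}=C_{21}$ --- applying Lemma 2.1 a second time to the derived identity $(\ast)$ with $P_{2}$, so that the $\mathcal{A}_{11}$ term annihilates and injectivity compares $\eta T_{21}^{*}+|\eta|^{2}T_{21}$ with $\eta C_{21}^{*}+|\eta|^{2}C_{21}$ --- is correct and arguably cleaner than what the paper does. But the proof stops precisely where the claim lives: you never establish $T_{11}=A_{11}$. What you offer for that block is a description of intended manoeuvres together with the (accurate) observation that iterating $\bullet_{\eta}P_{1}$ only reproduces an equation of the same structural shape; diagnosing why the naive attempts fail is not a substitute for a proof. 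Your rewritten equation $\Phi(W)=\Phi(V)+\Phi(C_{21})$ is no closer to the goal: applying Lemma 2.1 to it with $P_{1}$ returns you to the same impasse, and applying it with $P_{2}$ only re-derives $T_{21}=C_{21}$. The missing idea is a way to merge the right-hand side $\Phi(A_{11}+\eta A_{11}^{*})+\Phi(C_{21}+\eta C_{21}^{*})$ into the single value $\Phi(A_{11}+\eta A_{11}^{*}+C_{21}+\eta C_{21}^{*})$, after which injectivity and Lemma \ref{l3} finish componentwise. Beware also that the ``peeling'' identity $\Phi(X_{11}+Y_{12}+Z_{21})=\Phi(X_{11})+\Phi(Y_{12}+Z_{21})$, which would let you cancel $\Phi(C_{21}+\eta C_{21}^{*})$ from both sides, is essentially Claim \ref{111221}, which the paper proves \emph{after}, and by means of, the present claim; any plan that tacitly appeals to it is circular.

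For comparison, the paper's own device at this point is the rewriting $C_{21}+\eta C_{21}^{*}=2\Re(C_{21}^{*})+(\eta-1)C_{21}^{*}$, used (with an appeal to Claim \ref{1112}) to combine the two $\Phi$-values on the right into one before invoking injectivity and Lemma \ref{l3}. Whether that combination is fully licensed by the claims available at that stage is itself a fair question to raise, but it is exactly the step your proposal acknowledges as ``the hardest part'' and then leaves open, so the proposal as written does not prove the claim.
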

Since $\Phi$ is surjective, we can find an element
$T=T_{11}+T_{12}+T_{21}+T_{22}\in\mathcal{A}$ such that
\begin{equation}\label{f1}
\Phi(T)=\Phi(A_{11})+\Phi(C_{21}),
\end{equation}
we should show $T=A_{11}+C_{12}$. We apply Lemma \ref{standard} to
(\ref{f1}) for $P_{2}$, then we have $\Phi(T\bullet_{\eta}P_{2})=0$.
It means $\Phi(T_{12}+T_{22}+\eta T_{12}^{*}+\eta T_{22}^{*})=0$. By
Lemma \ref{0} and
Lemma \ref{l3} we obtain $T_{12}=T_{22}=0$.\\
On the other hand, We apply Lemma \ref{standard} to (\ref{f1}) for
$P_{1}$, then by Claim \ref{1112}, we have
\begin{eqnarray*}
\Phi(T\bullet_{\eta}P_{1})&=&\Phi(A_{11}\bullet_{\eta}P_{1})+\Phi(C_{21}\bullet_{\eta}P_{1})\\
&=&\Phi(A_{11}+\eta A_{11}^{*})+\Phi(C_{21}+\eta C_{21}^{*})\\
&=&\Phi(A_{11}+\eta A_{11}^{*})+\Phi(2\Re(C_{21}^{*})+(\eta-1)C_{21}^{*})\\
&=&\Phi(A_{11}+\eta A_{11}^{*}+2\Re(C_{21}^{*})+(\eta-1)C_{21}^{*})
\end{eqnarray*}
Then we get following
$$\Phi(T_{11}+T_{21}+\eta T_{11}^{*}+T_{21}^{*})=\Phi(A_{11}+\eta
A_{11}^{*}+C_{21}+\eta C_{21}^{*}).$$ Now, we imply Lemma \ref{l3}
to obtain $T_{11}=A_{11}$ and $T_{21}=C_{21}$.\\
\\
Note that $\Phi(B_{12}+D_{22})=\Phi(B_{12})+\Phi(D_{22})$ where
$B_{12}\in\mathcal{A}_{12}$ and $D_{22}\in\mathcal{A}_{22}$  can be
obtained as above.

\begin{claim}\label{ijij}
For every $A_{ij}, B_{ij}\in \mathcal{A}_{ij}$ such that $1\leq
i\neq j\leq 2$, we have
$$\Phi(A_{ij}+B_{ij})=\Phi(A_{ij})+\Phi(B_{ij}).$$
\end{claim}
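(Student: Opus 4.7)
The plan is to mirror the strategy of the previous claims: take $(i,j)=(1,2)$ without loss of generality and, by surjectivity of $\Phi$, pick $T=T_{11}+T_{12}+T_{21}+T_{22}\in\mathcal{A}$ so that $\Phi(T)=\Phi(A_{12})+\Phi(B_{12})$. The goal is then to identify $T$ with $A_{12}+B_{12}$ by extracting its components one by one using Lemma \ref{standard}.

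First I would kill $T_{11}$ and $T_{21}$. Since $A_{12}P_1=0$ and $P_1A_{12}^*=0$, we have $A_{12}\bullet_\eta P_1=0=B_{12}\bullet_\eta P_1$, so Lemma \ref{standard} with $P=P_1$ gives $\Phi(T\bullet_\eta P_1)=0$ and hence $T\bullet_\eta P_1=0$ by Lemma \ref{0}. Expanding $T\bullet_\eta P_1=T_{11}+T_{21}+\eta T_{11}^*+\eta T_{21}^*$ and multiplying on the left by $P_2$ yields $T_{21}=0$; the remainder $T_{11}+\eta T_{11}^*=0$ then gives $T_{11}=0$ by Lemma \ref{l3}.

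Next, to eliminate $T_{22}$ I would exploit the self-invariance $(C_{12}+\eta C_{12}^*)\bullet_\eta P_2=C_{12}+\eta C_{12}^*$, valid for every $C_{12}\in\mathcal{A}_{12}$. Applying Lemma \ref{standard} with $P=P_2$ twice yields $\Phi((T\bullet_\eta P_2)\bullet_\eta P_2)=\Phi(T\bullet_\eta P_2)$, hence $(T\bullet_\eta P_2)\bullet_\eta P_2=T\bullet_\eta P_2$ by injectivity of $\Phi$. Since $T=T_{12}+T_{22}$, reading off the $\mathcal{A}_{22}$ component of this identity produces $|\eta|^2T_{22}+\eta T_{22}^*=0$, which, after dividing by $\eta$ and taking adjoints, rearranges to $T_{22}+\eta T_{22}^*=0$ and thus forces $T_{22}=0$ via Lemma \ref{l3}. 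So $T=T_{12}$.

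The last step, identifying $T_{12}$ with $A_{12}+B_{12}$, is the main obstacle. Applying Lemma \ref{standard} with $P=P_2$ to $\Phi(T_{12})=\Phi(A_{12})+\Phi(B_{12})$ and decomposing via Claim \ref{1221} yields, after cancelling $\Phi(T_{12})$, the twin identity $\Phi(\eta T_{12}^*)=\Phi(\eta A_{12}^*)+\Phi(\eta B_{12}^*)$ in $\mathcal{A}_{21}$, so the $\mathcal{A}_{12}$ version of the claim becomes coupled, through the adjoint, to the parallel $\mathcal{A}_{21}$ version. To close the loop I plan to prove both versions simultaneously: the relation $C_{12}\bullet_\eta P_1=0$ combined with the $|\eta|\neq 1$ trick of Lemma \ref{l3} (applied now in $\mathcal{B}$) yields $\Phi(C_{12})\Phi(P_1)=0$ for every $C_{12}\in\mathcal{A}_{12}$, localizing the residual $X:=\Phi(A_{12})+\Phi(B_{12})-\Phi(A_{12}+B_{12})$ into the corner $\Phi(P_1)\mathcal{B}\Phi(P_2)$; a further use of the structural relation with $P_2$ together with the primeness of $\mathcal{B}$ should force $X=0$, whence $T_{12}=A_{12}+B_{12}$ follows by injectivity, and the $\mathcal{A}_{21}$ case is then immediate by taking adjoints.
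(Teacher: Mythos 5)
Your first two reductions are sound: killing $T_{11}$ and $T_{21}$ via $A_{12}\bullet_{\eta}P_{1}=B_{12}\bullet_{\eta}P_{1}=0$ is exactly what the paper does, and your elimination of $T_{22}$ through the idempotence identity $(T\bullet_{\eta}P_{2})\bullet_{\eta}P_{2}=T\bullet_{\eta}P_{2}$ is a correct (and rather elegant) alternative to the paper's route. The problem is that the part you yourself flag as ``the main obstacle'' --- showing $T_{12}=A_{12}+B_{12}$ --- is never actually proved. Your identity $\Phi(\eta T_{12}^{*})=\Phi(\eta A_{12}^{*})+\Phi(\eta B_{12}^{*})$ merely restates the claim in the opposite corner, and the closing argument is left at the level of ``should force $X=0$.'' Moreover, the one concrete step you do propose there is shaky: to deduce $\Phi(C_{12})\Phi(P_{1})=0$ from $\Phi(C_{12})\Phi(P_{1})+\eta\Phi(P_{1})\Phi(C_{12})^{*}=0$ via Lemma \ref{l3}, you need $\eta\Phi(P_{1})\Phi(C_{12})^{*}$ to equal $\eta\bigl(\Phi(C_{12})\Phi(P_{1})\bigr)^{*}$, i.e.\ you need $\Phi(P_{1})$ to be self-adjoint; but that is Lemma \ref{projection}, which is only available \emph{after} the additivity claims (it rests on Lemma \ref{ghab}, which cites this very claim). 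So the plan is circular as well as incomplete. Note also that the paper deliberately reserves primeness of $\mathcal{B}$ for the diagonal corners (Claim \ref{iiii}); it is not used here.

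For comparison, the paper's closing move is purely algebraic: it applies Lemma \ref{standard} with $P_{j}$ to get $\Phi(T\bullet_{\eta}P_{j})=\Phi(A_{ij}+\eta A_{ij}^{*})+\Phi(B_{ij}+\eta B_{ij}^{*})$, rewrites $A_{ij}+\eta A_{ij}^{*}=2\Re(A_{ij}^{*})+(\eta-1)A_{ij}^{*}$ and $B_{ij}+\eta B_{ij}^{*}=2\eta\Re(B_{ij})+(1-\eta)B_{ij}$, merges the two $\Phi$-values into the single value $\Phi(A_{ij}+\eta A_{ij}^{*}+B_{ij}+\eta B_{ij}^{*})$ by appealing to Claim \ref{1221}, and then reads off $T_{ij}=A_{ij}+B_{ij}$ from the $\mathcal{A}_{ij}$-component. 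Whatever one thinks of that merging step, it is the substantive content of the claim, and your proposal contains no working substitute for it.
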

Let $T=T_{11}+T_{12}+T_{21}+T_{22}\in \mathcal{A}$ be such that
\begin{equation}\label{bbb2}
\Phi(T)=\Phi(A_{ij})+\Phi(B_{ij}).
\end{equation}
By applying Lemma \ref{standard} to (\ref{bbb2}) for $P_{i}$, we get
$$\Phi(TP_{i}+\eta P_{i}T^{*})=\Phi(A_{ij}P_{i}+\eta P_{i}A_{ij}^{*})+\Phi(B_{ij}P_{i}+\eta P_{i}B_{ij}^{*})=\Phi(0)=0,$$
therefore, $\Phi(T_{ii}+T_{ji}+\eta T_{ii}^{*}+\eta T_{ji}^{*})=0$.
So, by Lemma \ref{l3} we have  $T_{ii}=T_{ji}=0$.\\
On the other hand, we apply Lemma \ref{standard} to (\ref{bbb2}) for
$P_{j}$ again, by Claim \ref{1221}, it follows
\begin{eqnarray*}
\Phi(TP_{j}+\eta P_{j}T^{*})&=&\Phi(A_{ij}P_{j}+\eta P_{j}A_{ij}^{*})+\Phi(B_{ij}P_{j}+\eta P_{j}B_{ij}^{*})\\
&=&\Phi(A_{ij}+\eta A_{ij}^{*})+\Phi(B_{ij}+\eta B_{ij}^{*})\\
&=&\Phi(2\Re(A_{ij}^{*})+(\eta-1)A_{ij}^{*})+\Phi(2\eta \Re(B_{ij})+(1-\eta)B_{ij})\\
&=&\Phi(2\Re(A_{ij}^{*})+(\eta-1)A_{ij}^{*}+2\eta \Re(B_{ij})+(1-\eta)B_{ij})\\
&=&\Phi(A_{ij}+\eta A_{ij}^{*}+B_{ij}+\eta B_{ij}^{*}).
\end{eqnarray*}
So, $$T_{ij}+T_{jj}+\eta T_{ij}^{*}+\eta T_{jj}^{*}=A_{ij}+\eta
A_{ij}^{*}+B_{ij}+\eta B_{ij}^{*}.$$ By Lemma \ref{l3} we have
$T_{jj}=0$ and $T_{ij}=A_{ij}+B_{ij}$.

\begin{claim}\label{111221}
For every $A_{11}\in\mathcal{A}_{11}$, $B_{12}\in\mathcal{A}_{12}$,
$C_{21}\in\mathcal{A}_{21}$, we have
$$\Phi(A_{11}+B_{12}+C_{21})=\Phi(A_{11})+\Phi(B_{12})+\Phi(C_{21}).$$
\end{claim}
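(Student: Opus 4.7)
The plan is to follow the template of the preceding claims: use surjectivity to produce $T=T_{11}+T_{12}+T_{21}+T_{22}\in\mathcal{A}$ with $\Phi(T)=\Phi(A_{11})+\Phi(B_{12})+\Phi(C_{21})$, apply Lemma \ref{standard} with $P_{1}$ and then with $P_{2}$, and finally read off each block of $T$ using injectivity together with Lemma \ref{l3}. Recall from the earlier claims that $TP_{1}+\eta P_{1}T^{*}=T_{11}+T_{21}+\eta T_{11}^{*}+\eta T_{21}^{*}$ and $TP_{2}+\eta P_{2}T^{*}=T_{12}+T_{22}+\eta T_{12}^{*}+\eta T_{22}^{*}$, so once we identify $T\bullet_{\eta}P_{i}$ as a concrete element, each $T_{ij}$ will fall out.

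The only real obstacle is that Lemma \ref{standard} is stated for a two-term sum on the right, whereas we have three terms. The remedy is to repackage the right-hand side before applying it, using one of the pairwise additivity results already at hand. For the $P_{1}$-direction, Claim \ref{1121} lets me merge the $\mathcal{A}_{11}$ and $\mathcal{A}_{21}$ summands, so that $\Phi(T)=\Phi(A_{11}+C_{21})+\Phi(B_{12})$. Then Lemma \ref{standard} applied with $P_{1}$ yields
$$\Phi(T\bullet_{\eta}P_{1})=\Phi((A_{11}+C_{21})\bullet_{\eta}P_{1})+\Phi(B_{12}\bullet_{\eta}P_{1})=\Phi(A_{11}+\eta A_{11}^{*}+C_{21}+\eta C_{21}^{*}),$$
because $B_{12}\bullet_{\eta}P_{1}=0$ and $\Phi(0)=0$ by Lemma \ref{0}. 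Injectivity gives $T\bullet_{\eta}P_{1}=A_{11}+\eta A_{11}^{*}+C_{21}+\eta C_{21}^{*}$; reading off the $(2,1)$-block gives $T_{21}=C_{21}$, and applying Lemma \ref{l3} to the $(1,1)$-block equality $(T_{11}-A_{11})+\eta(T_{11}-A_{11})^{*}=0$ gives $T_{11}=A_{11}$.

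For the $P_{2}$-direction I would symmetrically use Claim \ref{1112} to rewrite $\Phi(T)=\Phi(A_{11}+B_{12})+\Phi(C_{21})$. Since $C_{21}\bullet_{\eta}P_{2}=0$ and $(A_{11}+B_{12})\bullet_{\eta}P_{2}=B_{12}+\eta B_{12}^{*}$, Lemma \ref{standard} together with Lemma \ref{0} yields $\Phi(T\bullet_{\eta}P_{2})=\Phi(B_{12}+\eta B_{12}^{*})$, whence $T\bullet_{\eta}P_{2}=B_{12}+\eta B_{12}^{*}$ by injectivity. Comparing the $(1,2)$-block gives $T_{12}=B_{12}$, and Lemma \ref{l3} applied to the $(2,2)$-block equality $T_{22}+\eta T_{22}^{*}=0$ gives $T_{22}=0$. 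Combining the two directions, $T=A_{11}+B_{12}+C_{21}$, which is precisely the desired additivity.
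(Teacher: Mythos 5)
Your proof is correct, and it follows the paper's overall template (produce $T$ by surjectivity, test against $P_{1}$ and $P_{2}$ via Lemma \ref{standard}, then identify the blocks of $T$ using injectivity and Lemma \ref{l3}), but it handles the key reduction step differently---and, in fact, more carefully than the paper does. The paper applies the identity of Lemma \ref{standard} directly to the three-term sum and is then left with $\Phi(A_{11}+\eta A_{11}^{*})+\Phi(C_{21}+\eta C_{21}^{*})$, which it merges into a single $\Phi$ by invoking Claim \ref{1112}; that citation does not literally apply, since $C_{21}+\eta C_{21}^{*}$ has components in both $\mathcal{A}_{21}$ and $\mathcal{A}_{12}$, so the merge really requires additivity on $\mathcal{A}_{11}\oplus\mathcal{A}_{12}\oplus\mathcal{A}_{21}$---dangerously close to the very claim being proved. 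You avoid this by regrouping \emph{before} applying the lemma: writing $\Phi(T)=\Phi(A_{11}+C_{21})+\Phi(B_{12})$ via Claim \ref{1121} for the $P_{1}$ test, and $\Phi(T)=\Phi(A_{11}+B_{12})+\Phi(C_{21})$ via Claim \ref{1112} for the $P_{2}$ test, so that Lemma \ref{standard} is used exactly in its stated two-term form and the surviving term is already a single $\Phi$ of the target element, the other term vanishing because $B_{12}\bullet_{\eta}P_{1}=C_{21}\bullet_{\eta}P_{2}=0$ and $\Phi(0)=0$ by Lemma \ref{0}. All of your block computations check out ($T_{21}=C_{21}$ and $T_{11}=A_{11}$ from the $P_{1}$ test, $T_{12}=B_{12}$ and $T_{22}=0$ from the $P_{2}$ test), so your argument is complete and arguably cleaner than the published one.
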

Let $T=T_{11}+T_{12}+T_{21}+T_{22}\in\mathcal{A}$ be such that
\begin{equation}\label{b3}
\Phi(T)=\Phi(A_{11})+\Phi(B_{12})+\Phi(C_{21}).
\end{equation}
By applying Lemma \ref{standard} to (\ref{b3}) for $P_{2}$, we have
\begin{eqnarray*}
\Phi(T\bullet_{\eta}P_{2})&=&\Phi(A_{11}\bullet_{\eta}P_{2})+\Phi(B_{12}\bullet_{\eta}P_{2})+\Phi(C_{21}\bullet_{\eta}P_{2})\\
&=&\Phi(B_{12}\bullet_{\eta}P_{2}).
\end{eqnarray*}
Thus, by injectivity of $\Phi$ we have
$T\bullet_{\eta}P_{2}=B_{12}\bullet_{\eta}P_{2}$. It follows that
$$T_{22}+T_{12}+\eta T_{22}^{*}+\eta T_{12}^{*}=B_{12}+\eta
B_{12}^{*}.$$ Multiply above equation by $P_{1}$ from left side we
have $T_{12}=B_{12}$ and $T_{22}=0$.\\
Similarly, we apply Lemma \ref{standard} to (\ref{b3}) for $P_{1}$.
By Claim \ref{1112}, we have the following
\begin{eqnarray*}
\Phi(T\bullet_{\eta}P_{1})&=&\Phi(A_{11}\bullet_{\eta}P_{1})+\Phi(B_{12}\bullet_{\eta}P_{1})+\Phi(C_{21}\bullet_{\eta}P_{1})\\
&=&\Phi(A_{11}\bullet_{\eta}P_{1})+\Phi(C_{21}\bullet_{\eta}P_{1})\\
&=&\Phi(A_{11}+\eta A_{11}^{*})+\Phi(C_{21}+\eta C_{21}^{*})\\
&=&\Phi(A_{11}+\eta
A_{11}^{*})+\Phi(2\Re(C_{21}^{*})+(\eta-1)C_{21}^{*})\\
&=&\Phi(A_{11}+\eta
A_{11}^{*}+2\Re(C_{21}^{*})+(\eta-1)C_{21}^{*})\\
&=&\Phi(A_{11}+\eta A_{11}^{*}+C_{21}+\eta C_{21}^{*})
\end{eqnarray*}
Injectivity of $\Phi$ implies $T_{11}+T_{21}+\eta T_{11}^{*}+\eta
T_{21}^{*}=A_{11}+C_{21}+\eta A_{11}^{*}+\eta C_{21}^{*}$. We have
$T_{11}=A_{11}$ and $T_{21}=C_{21}$.

\begin{claim}\label{11122122}
For every $A_{11}\in\mathcal{A}_{11}$, $B_{12}\in\mathcal{A}_{12}$,
$ {C_{21}}\in\mathcal{A}_{21}$ and $D_{22}\in\mathcal{A}_{22}$ we
have
$$\Phi(A_{11}+B_{12}+C_{21}+D_{22})=\Phi(A_{11})+\Phi(B_{12})+\Phi(C_{21})+\Phi(D_{22}).$$
\end{claim}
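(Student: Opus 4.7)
The plan follows the template established by Claims \ref{1122}--\ref{111221}. Using surjectivity of $\Phi$, I pick $T = T_{11}+T_{12}+T_{21}+T_{22}$ with $\Phi(T)=\Phi(A_{11})+\Phi(B_{12})+\Phi(C_{21})+\Phi(D_{22})$, and aim to identify each $T_{ij}$ with the prescribed block. Since Lemma \ref{standard} requires a two-term hypothesis, I first invoke Claim \ref{111221} to collapse the first three summands into $\Phi(A_{11}+B_{12}+C_{21})$, putting the hypothesis into the form $\Phi(T) = \Phi(A_{11}+B_{12}+C_{21}) + \Phi(D_{22})$, to which Lemma \ref{standard} then applies for $P\in\{P_1,P_2\}$.

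With $P=P_1$, orthogonality $P_1P_2=0$ gives $B_{12}\bullet_\eta P_1 = 0 = D_{22}\bullet_\eta P_1$ along with $A_{11}\bullet_\eta P_1 = A_{11}+\eta A_{11}^*$ and $C_{21}\bullet_\eta P_1 = C_{21}+\eta C_{21}^*$. Combined with $\Phi(0)=0$ from Lemma \ref{0}, this yields $\Phi(T\bullet_\eta P_1) = \Phi(A_{11}+\eta A_{11}^* + C_{21}+\eta C_{21}^*)$, so injectivity gives $T\bullet_\eta P_1 = A_{11}+\eta A_{11}^* + C_{21}+\eta C_{21}^*$. Expanding $T\bullet_\eta P_1 = T_{11}+T_{21}+\eta T_{11}^*+\eta T_{21}^*$ and comparing the $\mathcal{A}_{ij}$-blocks, the $\mathcal{A}_{11}$-component becomes $(T_{11}-A_{11})+\eta(T_{11}-A_{11})^* = 0$ and forces $T_{11}=A_{11}$ via Lemma \ref{l3}, while the $\mathcal{A}_{12}$-component (equivalently $\mathcal{A}_{21}$-component) immediately gives $T_{21}=C_{21}$.

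The analogous calculation with $P=P_2$ uses the $\mathcal{A}_{12}+\mathcal{A}_{21}+\mathcal{A}_{22}$ counterpart of Claim \ref{111221}, available by the very same argument as in the symmetric remark after Claim \ref{1121}, to merge the resulting sum of $\Phi$-values. This yields $T\bullet_\eta P_2 = B_{12}+\eta B_{12}^*+D_{22}+\eta D_{22}^*$, from which the $\mathcal{A}_{12}$-block gives $T_{12}=B_{12}$ and the $\mathcal{A}_{22}$-block together with Lemma \ref{l3} gives $T_{22}=D_{22}$. Combining the two computations yields $T = A_{11}+B_{12}+C_{21}+D_{22}$, which is what is required.

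The main obstacle is organizational rather than technical: at each invocation of Lemma \ref{standard} one must first collapse the sum of $\Phi$-values into a two-term form, and afterwards re-collect the resulting $\Phi$-values into a single $\Phi$-argument using the correct sequence of earlier claims, occasionally in their symmetric forms. One must also carefully track which cross-products $X_{ij}\bullet_\eta P_k$ vanish because of the $P_1P_2=0$ identity; once these bookkeeping choices are made, the blockwise comparison combined with Lemma \ref{l3} closes the argument mechanically.
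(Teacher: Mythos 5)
Your proposal is correct and follows essentially the same route as the paper: surjectivity to pick $T$, Lemma \ref{standard} with $P_1$ and $P_2$, vanishing of the cross-products, recombination via Claims \ref{1221} and \ref{111221} (and the symmetric $\mathcal{A}_{12}+\mathcal{A}_{21}+\mathcal{A}_{22}$ counterpart, which the paper also uses only implicitly via ``same computation as above''), and blockwise comparison with Lemma \ref{l3}. The only difference is organizational: you collapse the first three $\Phi$-values into one before applying Lemma \ref{standard}, whereas the paper applies the (implicitly multi-term) lemma to all four summands and re-collects afterwards; the two are interchangeable.
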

Assume $T=T_{11}+T_{12}+T_{21}+T_{22}$ which satisfies in
\begin{equation}\label{d1}
\Phi(T)=\Phi(A_{11})+\Phi(B_{12})+\Phi(C_{21})+\Phi(D_{22}).
\end{equation}
By using Lemma \ref{standard} to ($\ref{d1}$) for $P_{1}$, Claim
\ref{1221} and Claim \ref{111221}, we obtain
\begin{eqnarray*}
\Phi(T\bullet_{\eta}P_{1})&=&\Phi(A_{11}\bullet_{\eta}P_{1})+\Phi(B_{12}\bullet_{\eta}P_{1})+\Phi(C_{21}\bullet_{\eta}P_{1})+\Phi(D_{22}\bullet_{\eta}P_{1})\\
&=&\Phi(A_{11}\bullet_{\eta}P_{1})+\Phi(C_{21}\bullet_{\eta}P_{1})\\
&=&\Phi(A_{11}+\eta A_{11}^{*})+\Phi(C_{21}+\eta C_{21}^{*})\\
&=&\Phi(A_{11}+\eta A_{11}^{*})+\Phi(C_{21})+\Phi(\eta C_{21}^{*})\\
&=&\Phi(A_{11}+\eta A_{11}^{*}+C_{21}+\eta C_{21}^{*})
\end{eqnarray*}
Since $\Phi$ is injective we have  $T_{11}+T_{21}+\eta
T_{11}^{*}+\eta T_{21}^{*}=A_{11}+C_{21}+\eta A_{11}^{*}+\eta
C_{21}^{*}$. We obtain $T_{11}=A_{11}$ and $T_{21}=C_{21}$.\\
Similarly, apply Lemma \ref{standard} to ($\ref{d1}$) for $P_{2}$
and same computation as above we can easily obtain $T_{12}=B_{12}$
and $T_{22}=D_{22}$. So,
$\Phi(A_{11}+B_{12}+C_{21}+D_{22})=\Phi(A_{11})+\Phi(B_{12})+\Phi(C_{21})+\Phi(D_{22})$.
\begin{lemma}\label{ghab}
Let $\Phi$ satisfy the assumptions of the Main Theorem. Then, for
every $A\in\mathcal{A}$ we have the following
$$\Phi(A\bullet_{\eta}I)=\Phi(A)\bullet_{\eta}\Phi(I).$$
\end{lemma}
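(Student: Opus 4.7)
The plan is to exploit two facts already in hand: first, the hypothesis gives us the Jordan $\eta_{\ast}$-product identity for $P\in\{P_{1},P_{2}\}$; second, by Claim \ref{11122122} we now know that $\Phi$ is additive on all of $\mathcal{A}$. The idea is to decompose $I_{\mathcal{A}}=P_{1}+P_{2}$, split the product $A\bullet_{\eta}I_{\mathcal{A}}$ accordingly, push $\Phi$ through using additivity, and then apply the hypothesis on each piece.

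Concretely, I would first observe that
$$A\bullet_{\eta}I_{\mathcal{A}}=A(P_{1}+P_{2})+\eta(P_{1}+P_{2})A^{*}=(A\bullet_{\eta}P_{1})+(A\bullet_{\eta}P_{2}).$$
Applying $\Phi$ and invoking additivity (Claim \ref{11122122}) yields
$$\Phi(A\bullet_{\eta}I_{\mathcal{A}})=\Phi(A\bullet_{\eta}P_{1})+\Phi(A\bullet_{\eta}P_{2}).$$
Then the Jordan $\eta_{\ast}$-hypothesis applied to each of $P_{1}$ and $P_{2}$ gives
$$\Phi(A\bullet_{\eta}I_{\mathcal{A}})=\Phi(A)\Phi(P_{1})+\eta\Phi(P_{1})\Phi(A)^{*}+\Phi(A)\Phi(P_{2})+\eta\Phi(P_{2})\Phi(A)^{*}.$$
Regrouping and using additivity again, together with the unital hypothesis $\Phi(I_{\mathcal{A}})=I_{\mathcal{B}}$, one gets $\Phi(P_{1})+\Phi(P_{2})=\Phi(P_{1}+P_{2})=\Phi(I_{\mathcal{A}})=I_{\mathcal{B}}$, so
$$\Phi(A\bullet_{\eta}I_{\mathcal{A}})=\Phi(A)\,I_{\mathcal{B}}+\eta\,I_{\mathcal{B}}\,\Phi(A)^{*}=\Phi(A)\bullet_{\eta}\Phi(I_{\mathcal{A}}),$$
which is what we wanted.

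There is no real obstacle here; the lemma is essentially a bookkeeping consequence of the additivity already established in Claim \ref{11122122} and of $\Phi$ being unital. The only thing to be careful about is the order of operations, namely that additivity of $\Phi$ has been proved on \emph{all} of $\mathcal{A}$ before this lemma is invoked, so that we are entitled to pull $\Phi$ across the sum of the two summands $A\bullet_{\eta}P_{1}$ and $A\bullet_{\eta}P_{2}$ and also across $P_{1}+P_{2}$.
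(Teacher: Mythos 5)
Your computation follows the same route as the paper's (split $I_{\mathcal{A}}=P_{1}+P_{2}$, apply the hypothesis to each piece, recombine), but the justification you give for the recombination step is circular as stated. You assert that by Claim \ref{11122122} ``$\Phi$ is additive on all of $\mathcal{A}$'' before this lemma is invoked, and you even flag this as the one point needing care. That is exactly where the argument breaks: Claim \ref{11122122} only treats sums $A_{11}+B_{12}+C_{21}+D_{22}$ with one summand from each corner. Full additivity of $\Phi$ also requires additivity \emph{within} each diagonal corner $\mathcal{A}_{ii}$, which is Claim \ref{iiii} --- and that claim is proved \emph{after} this lemma, via Lemma \ref{p}, which in turn rests on Lemma \ref{ghab}. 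So if your proof genuinely needed additivity on all of $\mathcal{A}$, the overall scheme would be circular.

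Fortunately the two instances of additivity you actually use do follow from what is available at this stage, but this has to be checked rather than asserted. The identity $\Phi(P_{1}+P_{2})=\Phi(P_{1})+\Phi(P_{2})$ is Claim \ref{1122}, since $P_{1}\in\mathcal{A}_{11}$ and $P_{2}\in\mathcal{A}_{22}$. For $\Phi\bigl((A\bullet_{\eta}P_{1})+(A\bullet_{\eta}P_{2})\bigr)=\Phi(A\bullet_{\eta}P_{1})+\Phi(A\bullet_{\eta}P_{2})$, write $X=A\bullet_{\eta}P_{1}=A_{11}+A_{21}+\eta A_{11}^{*}+\eta A_{21}^{*}$ and $Y=A\bullet_{\eta}P_{2}=A_{22}+A_{12}+\eta A_{22}^{*}+\eta A_{12}^{*}$; then $X$ has no $\mathcal{A}_{22}$-component and $Y$ has no $\mathcal{A}_{11}$-component, so the diagonal parts of the two summands never collide, and decomposing $X+Y$ into its four corners and applying Claims \ref{1122}, \ref{ijij}, \ref{111221} and \ref{11122122} yields exactly this instance. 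This is the combination of claims the paper itself cites. Replace your appeal to global additivity by this corner-by-corner verification and your proof becomes correct and essentially identical to the paper's.
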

\begin{proof}
Definition of Jordan $\eta_{\ast}$-product implies that
\begin{equation}\label{vvv1}
\Phi(A\bullet_{\eta}P_{1})=\Phi(A)\bullet_{\eta}\Phi(P_{1}),
\end{equation}
and
\begin{equation}\label{vvv2}
\Phi(A\bullet_{\eta}P_{2})=\Phi(A)\bullet_{\eta}\Phi(P_{2}).
\end{equation}
Add Equations (\ref{vvv1}) and (\ref{vvv2}) together, we have
$$\Phi(A\bullet_{\eta}P_{1})+\Phi(A\bullet_{\eta}P_{2})=\Phi(A)\bullet_{\eta}(\Phi(P_{1})+\Phi(P_{2})).$$
By Claims \ref{1122}, \ref{ijij} and \ref{11122122}, we obtain
$$\Phi(A\bullet_{\eta}P_{1}+A\bullet_{\eta}P_{2})=\Phi(A)\bullet_{\eta}\Phi(P_{1}+P_{2}),$$
Equivalently,
$$\Phi(A\bullet_{\eta}(P_{1}+P_{2}))=\Phi(A)\bullet_{\eta}\Phi(P_{1}+P_{2}),$$
so, $\Phi(A\bullet_{\eta}I)=\Phi(A)\bullet_{\eta}\Phi(I).$
\end{proof}
\begin{lemma}\label{projection}
Let $\Phi$ satisfy the assumptions of the Main Theorem, then
$\Phi(P_{1})$ and $\Phi(P_{2})$ are  nontrivial orthogonal
projections in $\mathcal{B}$.
\end{lemma}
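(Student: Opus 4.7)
The plan is to set $Q = \Phi(P_1)$ and $R = \Phi(P_2)$ and prove that $Q$ is a self-adjoint idempotent with $R = I_{\mathcal{B}} - Q$; orthogonality of $Q$ and $R$, the fact that $R$ itself is a projection, and the nontriviality will then all follow immediately.

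First I would assemble two formulas for $\Phi((1+\eta)P_1)$. Applying the defining preservation relation with $A = P = P_1$, and using $P_1^{*} = P_1$ so that $P_1 \bullet_\eta P_1 = (1+\eta)P_1$, yields $\Phi((1+\eta)P_1) = Q^{2} + \eta Q Q^{*}$. On the other hand, Lemma \ref{ghab} together with the unital hypothesis $\Phi(I_{\mathcal{A}}) = I_{\mathcal{B}}$ gives $\Phi(A \bullet_\eta I_{\mathcal{A}}) = \Phi(A) + \eta \Phi(A)^{*}$ for every $A \in \mathcal{A}$; at $A = P_1$ this reads $\Phi((1+\eta)P_1) = Q + \eta Q^{*}$. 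Equating the two expressions gives the key identity
$$Q + \eta Q^{*} = Q^{2} + \eta Q Q^{*}. \qquad (\ast)$$

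Next I would use the additivity already proved to pin down $R$. Since $P_1 + P_2 = I_{\mathcal{A}}$ and $\Phi$ is unital, Claim \ref{1122} gives $Q + R = \Phi(I_{\mathcal{A}}) = I_{\mathcal{B}}$, so $R = I_{\mathcal{B}} - Q$. A second algebraic relation comes from $P_2 \bullet_\eta P_1 = 0$: applying $\Phi$ and using $\Phi(0) = 0$ (Lemma \ref{0}) produces $RQ + \eta Q R^{*} = 0$; substituting $R = I_{\mathcal{B}} - Q$ and expanding gives $(1+\eta)Q = Q^{2} + \eta Q Q^{*}$. Comparing this with $(\ast)$ forces $(1+\eta)Q = Q + \eta Q^{*}$, whence $\eta(Q - Q^{*}) = 0$, so $Q = Q^{*}$. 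Feeding $Q^{*} = Q$ back into $(\ast)$ collapses it to $(1+\eta)(Q - Q^{2}) = 0$, and since the hypothesis $|\eta| \neq 1$ excludes $\eta = -1$, we obtain $Q^{2} = Q$. Thus $Q$ is a self-adjoint projection, $R = I_{\mathcal{B}} - Q$ is the complementary projection, and $QR = RQ = 0$.

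Nontriviality is a quick consequence of injectivity: if $Q = 0$, then $\Phi(P_1) = 0 = \Phi(0)$ by Lemma \ref{0}, contradicting $P_1 \neq 0$, and a symmetric argument excludes $R = 0$. The main obstacle I anticipate is the algebraic step that extracts self-adjointness: one must write down the \emph{right} consequence of $P_2 \bullet_\eta P_1 = 0$ and notice that it meshes with $(\ast)$ in such a way that subtraction annihilates both quadratic terms $Q^{2}$ and $QQ^{*}$ simultaneously, leaving only the linear relation $\eta(Q - Q^{*}) = 0$. Everything after $Q = Q^{*}$ is a routine simplification that draws essentially on the assumptions $\eta \neq 0$ and $\eta \neq -1$.
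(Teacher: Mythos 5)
Your argument is correct, but it reaches self-adjointness by a genuinely different route than the paper. The paper's proof is more direct on that point: it takes $A=I$ in the preservation hypothesis to get $\Phi(I\bullet_{\eta}P)=\Phi(P)+\eta\Phi(P)$ and compares this with $\Phi(P\bullet_{\eta}I)=\Phi(P)+\eta\Phi(P)^{*}$ from Lemma \ref{ghab}; since $I\bullet_{\eta}P=P\bullet_{\eta}I=(1+\eta)P$, subtraction immediately gives $\Phi(P)=\Phi(P)^{*}$, and only then is $P\bullet_{\eta}P$ used to get idempotency via Lemma \ref{l3}. You instead avoid the substitution $A=I$ altogether and extract $Q=Q^{*}$ from the interaction of your identity $(\ast)$ with the relation $RQ+\eta QR^{*}=0$ coming from $P_{2}\bullet_{\eta}P_{1}=0$ and the decomposition $R=I-Q$; the cancellation of the quadratic terms $Q^{2}$ and $QQ^{*}$ is a nice observation and every step checks out (including the use of $\eta\neq 0$ and $\eta\neq -1$, both guaranteed by $|\eta|\neq 1$). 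The paper's route is shorter and treats $P_{1}$ and $P_{2}$ symmetrically, proving each image is a projection on its own terms, whereas you prove it for $Q$ and transfer to $R=I-Q$. A small point in your favor: you explicitly verify nontriviality of $Q$ and $R$ via injectivity and Lemma \ref{0}, a step the paper's proof asserts in the statement but never actually carries out.
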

\begin{proof}
Let $P\in\{P_{1},P_{2}\}$, where $P_{i}$ for $1\leq i\leq2$ are
nontrivial projections in $\mathcal{A}$. By Lemma \ref{ghab} and
definition of Jordan $\eta_{\ast}$-product we have
\begin{eqnarray*}
\Phi(P\bullet_{\eta} I)&=&\Phi(P)\bullet_{\eta} \Phi(I)\\
\Phi(I\bullet_{\eta} P)&=&\Phi(I)\bullet_{\eta} \Phi(P).
\end{eqnarray*}
Since $\Phi$ is unital, above equations give us
\begin{eqnarray}
\Phi(P+\eta P)&=&\Phi(P)+\eta \Phi(P)^{*} \nonumber\\
\Phi(P+\eta P)&=&\Phi(P)+\eta \Phi(P), \label{projp}
\end{eqnarray}
we obtain $\Phi(P)=\Phi(P)^{*}$.\\
On the other hand,
$\Phi(P\bullet_{\eta}P)=\Phi(P)\bullet_{\eta}\Phi(P)$ then
$\Phi(P+\eta P)=\Phi(P)^{2}+\eta\Phi(P)^{2}$. It follows by
(\ref{projp}) that
$\Phi(P)+\eta\Phi(P)=\Phi(P)^{2}+\eta\Phi(P)^{2}$. Now, Lemma
\ref{l3} implies $\Phi(P)^{2}=\Phi(P)$.\\
We show that $\Phi(P_{1})$ and $\Phi(P_{2})$ are orthogonal. Let
$\Phi(P_{1})=Q_{1}$ and $\Phi(P_{2})=Q_{2}$, then by Claim
\ref{1122}, $Q_{1}+Q_{2}=I$. Also,
$Q_{1}.Q_{2}=\Phi(P_{1}).\Phi(P_{2})=\Phi(P_{1}).(\Phi(I)-\Phi(P_{1}))=0$.
\end{proof}

\begin{lemma}\label{p}
Let $\Phi$ satisfy the assumptions of the Main Theorem, we have
\begin{equation}\label{eq1}
\Phi(AP_{i})=\Phi(A)\Phi(P_{i}),
\end{equation}
for $1\leq i\leq 2$.
\end{lemma}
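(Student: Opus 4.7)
The plan is to exploit the simple but crucial identity
$$A\bullet_{\eta}P_{i}=AP_{i}+\eta P_{i}A^{*}=AP_{i}+\eta (AP_{i})^{*}=(AP_{i})\bullet_{\eta}I,$$
which follows from $P_i^*=P_i$. This rewrites the same element of $\mathcal{A}$ in two ways that can each be pushed through $\Phi$: one via the hypothesis of the Main Theorem, the other via the ``unital version'' supplied by Lemma \ref{ghab}.

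First I would apply the defining relation to the left-hand form to obtain
$$\Phi(A\bullet_{\eta}P_{i})=\Phi(A)\Phi(P_{i})+\eta\Phi(P_{i})\Phi(A)^{*},$$
and apply Lemma \ref{ghab} to the right-hand form to obtain
$$\Phi((AP_{i})\bullet_{\eta}I)=\Phi(AP_{i})+\eta\Phi(AP_{i})^{*}.$$
Equating the two expressions and invoking $\Phi(P_{i})^{*}=\Phi(P_{i})$ (Lemma \ref{projection}) so that $\Phi(P_{i})\Phi(A)^{*}=(\Phi(A)\Phi(P_{i}))^{*}$, the difference $X:=\Phi(AP_{i})-\Phi(A)\Phi(P_{i})$ satisfies
$$X+\eta X^{*}=0.$$

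Finally, Lemma \ref{l3} (which requires exactly the assumption $|\eta|\neq 1$) forces $X=0$, giving $\Phi(AP_{i})=\Phi(A)\Phi(P_{i})$ for $i=1,2$. There is essentially no obstacle beyond recognizing the key identity $A\bullet_{\eta}P_{i}=(AP_{i})\bullet_{\eta}I$; once this is in hand, the argument is a one-line algebraic manipulation combined with the two preparatory lemmas (\ref{ghab} and \ref{projection}) and the ``cancellation'' lemma \ref{l3}. The only delicate point to double-check is the self-adjointness of $\Phi(P_{i})$, but this is precisely the content of Lemma \ref{projection}, so no new work is required.
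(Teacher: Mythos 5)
Your proposal is correct and follows exactly the paper's own argument: rewrite $A\bullet_{\eta}P_{i}$ as $(AP_{i})\bullet_{\eta}I$, expand both sides via the hypothesis and Lemma \ref{ghab}, and cancel with Lemma \ref{l3}. You are in fact slightly more careful than the paper, which uses the self-adjointness of $\Phi(P_{i})$ silently in its ``equivalently'' step, whereas you flag it explicitly as coming from Lemma \ref{projection}.
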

\begin{proof}
It is easy to check
$\Phi(AP_{i}\bullet_{\eta}I)=\Phi(A\bullet_{\eta}P_{i})$.\\
Above equation can be written as
$\Phi(AP_{i})+\eta\Phi(AP_{i})^{*}=\Phi(A)\Phi(P_{i})+\eta\Phi(P_{i})\Phi(A)^{*}$.
Equivalently,
$$(\Phi(AP_{i})-\Phi(A)\Phi(P_{i}))+\eta(\Phi(AP_{i})-\Phi(A)\Phi(P_{i}))^{*}=0,$$
By applying Lemma \ref{l3}, we have
$\Phi(AP_{i})=\Phi(A)\Phi(P_{i}).$
\end{proof}

Now, Claim \ref{projection} ensures that there exist nontrivial
projections $Q_{i}$ $(i=1,2)$ such that $\Phi(P_{i})=Q_{i}$ and
$Q_{1}+Q_{2}=I$. We can write
$\mathcal{B}=\sum_{i,j=1}^{2}\mathcal{B}_{ij}$ where
$\mathcal{B}_{ij}=Q_{i}\mathcal{B}Q_{j},\ i,j=1,2$.\\

We imply the primeness property of $\mathcal{B}$ just in this claim.
\begin{claim}\label{iiii}
For every $A_{ii}, B_{ii}\in \mathcal{A}_{ii}$, $1\leq i\leq 2$ we
have
$$\Phi(A_{ii}+B_{ii})=\Phi(A_{ii})+\Phi(B_{ii}).$$
\end{claim}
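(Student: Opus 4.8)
The plan is to handle the two cases $i=1$ and $i=2$ symmetrically, so I treat $i=1$ and write $P_2=I_{\mathcal{A}}-P_1$, the case $i=2$ being identical after interchanging the roles of $P_1$ and $P_2$. By surjectivity choose $T=T_{11}+T_{12}+T_{21}+T_{22}$ with $\Phi(T)=\Phi(A_{11})+\Phi(B_{11})$; the goal is to force $T=A_{11}+B_{11}$. First I would feed this equation into Lemma~\ref{standard} with $P=P_2$. Since $A_{11},B_{11}\in\mathcal{A}_{11}$ we have $A_{11}\bullet_{\eta}P_2=A_{11}P_2+\eta P_2A_{11}^{*}=0$ and likewise $B_{11}\bullet_{\eta}P_2=0$, so by Lemma~\ref{0} the right-hand side vanishes and injectivity gives $T\bullet_{\eta}P_2=0$. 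Writing $T\bullet_{\eta}P_2=(T_{12}+\eta T_{12}^{*})+(T_{22}+\eta T_{22}^{*})$ and separating the $\mathcal{A}_{12}$, $\mathcal{A}_{21}$ and $\mathcal{A}_{22}$ components (applying Lemma~\ref{l3} to the $\mathcal{A}_{22}$ part) yields $T_{12}=T_{22}=0$, so $T=T_{11}+T_{21}$.

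Next I would apply Lemma~\ref{standard} to the same equation with $P=P_1$. Now $A_{11}\bullet_{\eta}P_1=A_{11}+\eta A_{11}^{*}$ and $B_{11}\bullet_{\eta}P_1=B_{11}+\eta B_{11}^{*}$ both lie in $\mathcal{A}_{11}$, while $T\bullet_{\eta}P_1=(T_{11}+\eta T_{11}^{*})+(T_{21}+\eta T_{21}^{*})$ has a genuine off-diagonal part $T_{21}+\eta T_{21}^{*}\in\mathcal{A}_{21}\oplus\mathcal{A}_{12}$. Using Claim~\ref{111221} to split the left-hand side and Lemma~\ref{l3} on the diagonal corner, this relation pins down $T_{11}$ only modulo the identity $\Phi(A_{11}+\eta A_{11}^{*})+\Phi(B_{11}+\eta B_{11}^{*})=\Phi\big((A_{11}+B_{11})+\eta(A_{11}+B_{11})^{*}\big)$, which is itself an instance of the additivity on $\mathcal{A}_{11}$ we are trying to prove, and it leaves $T_{21}$ entirely unconstrained. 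This is exactly where the block-by-block bookkeeping of Claims~\ref{1122}--\ref{11122122} stalls: every product we may form has second factor $P_1$, $P_2$ or $I$, so the defining relation (through Lemma~\ref{p}) only ever controls multiplication of images \emph{on the right} by $Q_1,Q_2$, and that cannot separate two elements living in the same diagonal corner.

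The remaining, decisive input is the primeness of $\mathcal{B}$, which I would use to supply the missing left-handed control. Set $W=\Phi(A_{11}+B_{11})-\Phi(A_{11})-\Phi(B_{11})$; the claim becomes $W=0$. Applying the defining relation with $P=P_2$ (so $\Phi(P_2)=Q_2$ by Lemma~\ref{projection}) to each of $A_{11}+B_{11}$, $A_{11}$, $B_{11}$, all of which satisfy $X\bullet_{\eta}P_2=0$, and subtracting, I obtain $W\bullet_{\eta}Q_2=WQ_2+\eta Q_2W^{*}=0$; since $Q_2W^{*}=(WQ_2)^{*}$, Lemma~\ref{l3} gives $WQ_2=0$, i.e.\ $W\in\mathcal{B}Q_1$. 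By Lemma~\ref{projection} the projections $Q_1,Q_2$ are nontrivial, so primeness forces $Q_1\mathcal{B}Q_2\neq0$ and $Q_2\mathcal{B}Q_1\neq0$.

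The plan is then to combine $WQ_2=0$ with a companion identity — which I would aim to manufacture by running the reduction above on the auxiliary elements $A_{11}+C_{12}$ and $B_{11}+C_{12}$ for arbitrary $C_{12}\in\mathcal{A}_{12}$, feeding the mixed-block additivity of Claims~\ref{1112}, \ref{1121}, \ref{111221} together with Lemma~\ref{p} back into Lemma~\ref{standard} — so as to produce an annihilation of the form $W\,\mathcal{B}\,R=0$ with $R$ a nonzero element supported in the $Q_2$-corner. Primeness of $\mathcal{B}$ would then give $W=0$, which is the asserted additivity, and the case $i=2$ follows by symmetry. The hard part is precisely the construction of this two-sided annihilator: because the hypotheses yield only right multiplication by the $Q_i$, fabricating the complementary relation that lets primeness close the gap is the single place where an idea beyond the elementary corner-wise computation of the earlier claims is genuinely needed.
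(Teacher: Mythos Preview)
Your diagnosis is accurate: the corner-by-corner bookkeeping of the earlier claims does stall here, and primeness of $\mathcal{B}$ is indeed the missing ingredient. But your proposal stops exactly at the decisive point. After reaching $WQ_2=0$ (which, incidentally, is automatic: Lemma~\ref{p} gives $\Phi(A_{11})Q_2=\Phi(A_{11}P_2)=\Phi(0)=0$ and likewise for $B_{11}$ and $A_{11}+B_{11}$, so $W\in\mathcal{B}Q_1$ from the start), you only \emph{announce} a plan to ``manufacture'' a relation $W\,\mathcal{B}\,R=0$ via auxiliary elements $A_{11}+C_{12}$, $B_{11}+C_{12}$. You never carry it out, and it is not clear how feeding such preimages through Lemma~\ref{standard} would ever insert an \emph{arbitrary} $\mathcal{B}$-factor between $W$ and a fixed nonzero $R$; every manipulation you have at your disposal still acts only by right-multiplication by $Q_1,Q_2$. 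Since the whole content of the claim lies in this step, what you have is a correct identification of the difficulty rather than a proof.

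The paper takes a different and more direct route than your preimage-chasing. It does \emph{not} pick $T$ with $\Phi(T)=\Phi(A_{ii})+\Phi(B_{ii})$; instead it lifts to arbitrary $A,B\in\mathcal{A}$, sets
\[
X=\Phi(P_iA+P_iB)-\Phi(P_iA)-\Phi(P_iB),
\]
and multiplies $X$ on the right by a generic element $T_{ji}=Q_jTQ_i\in\mathcal{B}_{ji}$. The key observation is that, by Lemma~\ref{p}, right-multiplication by $Q_j$ is exactly $\Phi(-)\mapsto\Phi(-\,P_j)$, so $XQ_j$ becomes the off-diagonal difference $\Phi(A_{ij}+B_{ij})-\Phi(A_{ij})-\Phi(B_{ij})$, which vanishes by Claim~\ref{ijij}. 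Hence $X\,Q_j T Q_i=0$ for every $T\in\mathcal{B}$, and primeness (with $Q_i\ne0$) is invoked; multiplying the resulting identity $\Phi(P_iA+P_iB)=\Phi(P_iA)+\Phi(P_iB)$ on the right by $Q_i$ and applying Lemma~\ref{p} once more specializes to the diagonal case. The idea you were missing is that Lemma~\ref{p} already converts the problem into the off-diagonal corner where additivity is known, so one can feed primeness with elements of $\mathcal{B}_{ji}$ directly instead of trying to fabricate auxiliary preimages in $\mathcal{A}$.
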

First, we will prove that
$\Phi(P_{i}A+P_{i}B)=\Phi(P_{i}A)+\Phi(P_{i}B)$ for every
$A,B\in \mathcal{A}$.\\
By Lemma \ref{p}, Claim \ref{ijij} and for every
$T_{ji}\in\mathcal{B}_{ji}$ such that $i\neq j$ we obtain
\begin{eqnarray*}
\left(\Phi(P_{i}A+P_{i}B)-\Phi(P_{i}A)-\Phi(P_{i}B)\right)T_{ji}&=&\left(\Phi(P_{i}A+P_{i}B)-\Phi(P_{i}A)-\Phi(P_{i}B)\right)Q_{j}TQ_{i}\\
&=&(\Phi(P_{i}A+P_{i}B)Q_{j}-\Phi(P_{i}A)Q_{j}\\
&&-\Phi(P_{i}B)Q_{j})TQ_{i}\\
&=&(\Phi(P_{i}AP_{j}+P_{i}BP_{j})-\Phi(P_{i}AP_{j})\\
&&-\Phi(P_{i}BP_{j}))TQ_{i}\\
&=&\left(\Phi(A_{ij}+B_{ij})-\Phi(A_{ij})-\Phi(B_{ij})\right)TQ_{i}\\
&=&\left(\Phi(A_{ij})+\Phi(B_{ij})-\Phi(A_{ij})-\Phi(B_{ij})\right)TQ_{i}\\
&=&0.
\end{eqnarray*}
By the primeness of $\mathcal{B}$, we have
\begin{equation}\label{eq5}
\Phi(P_{i}A+P_{i}B)=\Phi(P_{i}A)+\Phi(P_{i}A).
\end{equation}
 Now, multiply the
right side of equation (\ref{eq5}) by $\Phi(P_{i})=Q_{i}$ and use
Lemma \ref{p}, we obtain
$$\Phi(P_{i}AP_{i}+P_{i}BP_{i})=\Phi(P_{i}AP_{i})+\Phi(P_{i}BP_{i}).$$
\\
So, additivity of $\Phi$ comes from Claim \ref{ijij},
\ref{11122122}, \ref{iiii}. \\
It remains to prove $\Phi$ is $\ast$-preserving for non-zero
rational number $\eta$ such that $|\eta|\neq1$. Since $\Phi$ is
Jordan $\eta_{\ast}$-product preserving we have
$$\Phi(A\bullet_{\eta}I)=\Phi(A)\bullet_{\eta}\Phi(I),$$
$\Phi$ is unital, so
$$\Phi(A+\eta A^{*})=\Phi(A)+\eta \Phi(A)^{*}.$$
Additivity of $\Phi$ and above equation implies that
\begin{equation}\label{234}
\Phi(\eta A^{*})=\eta\Phi(A)^{*}.
\end{equation}
 Let $\eta=\frac{a}{b}$, where $a,
b$ are integers. It is easy to see that
\begin{equation}\label{star}
\Phi(\frac{1}{b}A^{*})=\frac{1}{b}\Phi(A^{*}).
\end{equation}
Now, by additivity of $\Phi$ and (\ref{star}) we have
$\Phi(\frac{a}{b}A^{*})=\frac{a}{b}\Phi(A^{*})$. It follows that
$\Phi(\eta A^{*})=\eta\Phi(A^{*})$. Hence, by latter equation and
equation (\ref{234}) we proved $\Phi(A^{*})=\Phi(A)^{*}$.

 \rightline{$\square$}




\begin{thebibliography}{}
\bibitem{bai}  Z.F. Bai, S.P. Du, Multiplicative Lie isomorphism between prime rings, Comm. Algebra \textbf{36} (2008), 1626-1633.

\bibitem{bai2}
 Z.F. Bai, S.P. Du, Multiplicative $\ast$-Lie isomorphism
between factors, J. Math. Anal. Appl. \textbf{346} (2008), 327-335.

\bibitem{bei}
K.I. Beidar, M. Bre\v{s}ar, M.A. Chebotar, W.S. Martindale, On
Herstein's Lie map conjecture (I), Trans. Amer. Math. Soc.
\textbf{353} (2001), 4235-4260.
\bibitem{bei2}
K.I. Beidar, M. Bre\v{s}ar, M.A. Chebotar, W.S. Martindale,
 On Herstein's Lie map conjecture (II), J. Algebra
\textbf{238} (2001), 239-264.
\bibitem{bei3}
K.I. Beidar, M. Bre\v{s}ar,   M.A. Chebotar,  W.S. Martindale, On
Herstein's Lie map conjecture (III), J. Algebra \textbf{238} (2002),
59-94.

\bibitem{bre}
M. Bre\v{s}ar, Jordan mappings of semiprime rings, J. Algebra
\textbf{127} (1989), 218-228.
\bibitem{bre2}
M. Bre\v{s}ar, A.  Fošner, On ring with involution equipped with
some new product, Publ. Math. Debrecen \textbf{57} (2000), 121-134.

\bibitem{cui}
J. Cui, C.K. Li, Maps preserving product $XY - Y X^{*}$ on factor
von Neumann algebras, Linear Algebra Appl. \textbf{431} (2009),
833-842.

\bibitem{dai}
L. Dai, F. Lu, Nonlinear maps preserving Jordan $\ast$-products, J.
Math. Anal. Appl. \textbf{409} (2014), 180-188.

\bibitem{hak}
J. Hakeda, Additivity of Jordan $*$-maps on AW$^{*}$-algebras, Proc.
Amer. Math. Soc. \textbf{96} (1986), 413-420.
\bibitem{her}
 I.N. Herstein, Jordan homomorphisms, Trans. Amer. Math.
Soc. \textbf{81} (1956), 331-341.
\bibitem{jac}
N. Jacobson,  C.E. Rickart, Jordan homomorphism of rings, Trans.
Amer. Math. Soc. \textbf{69} (1950), 479-502.

\bibitem{ji}
 P. Ji,  Z. Liu, Additivity of Jordan maps on standard Jordan
operator algebras, Linear Algebra Appl. \textbf{430} (2009),
335-343.
\bibitem{kad}
R.V. Kadison, Isometries of operator algebras, Ann. of Math.
\textbf{54} (1951), 325-338.

\bibitem{li}
C. Li, F.  Lu, X. Fang, Nonlinear mappings preserving product
$XY+YX^{*}$ on factor von Neumann algebras, Linear Algebra Appl.
\textbf{438} (2013), 2339-2345.

\bibitem{lu}
F. Lu, Additivity of Jordan maps on standard operator algebras,
Linear Algebra Appl. \textbf{357} (2002), 123-131.

\bibitem{lu2}
F. Lu, Jordan maps on associative algebras, Comm. Algebra
\textbf{31} (2003), 2273-2286.

\bibitem{lu3}
F. Lu, Jordan triple maps, Linear Algebra Appl. \textbf{375} (2003),
311-317.
\bibitem{mar}
L.W. Marcoux, Lie isomorphism of nest algebras, J. Funct. Anal.
\textbf{164} (1999), 163-180.

\bibitem{mol}
L. Moln\'{a}r, A condition for a subspace of B(H) to be an ideal,
Linear Algebra Appl. \textbf{235} (1996), 229-234.
\bibitem{mol2}
L. Moln\'{a}r,  On isomorphisms of standard operator algebras,
Studia Math. \textbf{142} (2000), 295-302.

\bibitem{mar}
W.S. Martindale III, When are multiplicative mappings additive?
Proc. Amer. Math. Soc. \textbf{21} (1969), 695-698.
\bibitem{mir}
C.R. Mires, Lie isomorphisms of operator algebras, Pacific J. Math.
\textbf{38} (1971), 717-735.
\bibitem{mir2}
C.R. Mires, Lie isomorphisms of factors, Trans. Amer. Math. Soc.
\textbf{147} (1970), 5-63.

\bibitem{qi}
X. Qi,  J. Hou,  Additivity of Lie multiplicative maps on triangular
algebras, Linear and Multilinear Algebra \textbf{59} (2011),
391-397.


\bibitem{sem}
P. \v{S}emrl, Quadratic functionals and Jordan $*$-derivations,
Studia Math. \textbf{97} (1991), 157-165.

\bibitem{taghavi}
A. Taghavi, V. Darvish, , H. Rohi, Additivity of maps preserving
products $AP\pm PA^{*}$, Accepted in Mathematica Slovaca.

\end{thebibliography}


\end{document}